\renewcommand\subsubsection{\@secnumfont}{\bfseries}%
\renewcommand\subsubsection{\@startsection{subsubsection}{3}
  \z@{.5\linespacing\@plus.7\linespacing}{-.5em}%
  {\normalfont\bfseries}}
\theoremstyle{definition}
\theoremstyle{definition}
\newtheorem{theorem}{Theorem}[section]
\newtheorem{proposition}[theorem]{Proposition}
\newtheorem{lemma}[theorem]{Lemma}
\newtheorem{corollary}[theorem]{Corollary}
\newcommand{\Z}{\mathbb{Z}}
\DeclareMathOperator{\lcm}{\lcm}
\title{The Non-Orientable Four-Ball Genus of a New Infinite Family of Torus Knots}
\author{Shreya Sinha}
\date{July 2025}
\begin{document}

\begin{abstract}
    We extend previous work by using a combination of band surgeries and known bounds to compute $\gamma_4(T_{4n, (2n\pm1)^2 + 4n-2}) = 2n-1$ for all $n \geq 1$. We further generalize this result by showing that $\gamma_4(T_{4n + 2k, n(4n + 2k) - 1}) = \gamma_4(T_{4n + 2k, (n+2)(4n + 2k) - 1}) = 2n-1 + k$ for all $n \geq 1$ and $k \geq 0$. All knots in this family, with the exception of $T_{4, 3}$, are counterexamples to Batson's conjecture.
\end{abstract}

\maketitle

\section{Introduction}
\subsection*{Preliminaries}
Given a knot $K$, one measure of its complexity is the constraints it places on the complexity of surfaces that bound it. In particular, if we restrict to smooth, non-orientable surfaces that are properly embedded in $B^4$, then the minimal genus among such surfaces provides a particularly informative measure. This invariant is known as the \textit{smooth non-orientable four-genus} of $K$, introduced by Murakami and Yasuhara in 2000 \cite{murakami2000FourgenusAF}, and denoted as $$\gamma_4(K) = \min \left\{\dim(H_1(F; \Z / 2\Z)) \mid F \text{ non-orientable surface}, \space \partial(F) = K, \space F \hookrightarrow B^4 \right\}$$ In the case that $K$ is slice, we define $\gamma_4(K ) = 0$. 

The non-orientable four-genus has been computed for several families of knots. The values for knots with 8 or 9 crossings were determined by Jabuka and Kelly \cite{Jabuka_2018}, those with 10 crossings by Ghanbarian \cite{Ghanbarian_2022}, and most 11 crossing non-alternating knots by Fairchild \cite{Fairchild_2024}. For many double twist knots, the non-orientable four-genus has been computed by Hoste, Shanahan, and Van Cott \cite{hoste2023nonorientable4genusdoubletwist}. A particularly active area of study involves torus knots. Allen computed $\gamma_4(T_{p, q})$ for all $p = 2, 3$ \cite{allen2020nonorientablesurfacesboundedknots}; Binns, Kang, Simone, and Truöl computed most cases for $p = 4$ \cite{binns2024nonorientablefourballgenustorus}, and Fairchild, Garcia, Murphy, and Percle have computed many cases for $p = 5, 6$ \cite{fairchild2024nonorientable}.

A particularly useful upper bound for $\gamma_4(T_{p, q})$ is given by the number of \textit{pinch moves} — band moves between adjacent strands relative to a standard torus knot diagram — required to convert $T_{p, q}$ into the unknot. This number is denoted by $\vartheta(T_{p, q})$. In \cite{batson2012nonorientablefourballgenusarbitrarily}, Batson conjectured that $\gamma_4(T_{p, q}) = \vartheta(T_{p, q})$ and proved this equality holds for the infinite family of torus knots of the form $T_{2k, 2k-1}$ with $k \geq 1$, establishing in particular that $\gamma_4(T_{2k, 2k-1}) = \vartheta(T_{2k, 2k-1}) = k - 1$. However, Lobb gave a counterexample to Batson's conjecture in \cite{lobb2019counterexamplebatsonsconjecture}, showing that for $T_{4, 9}$, we have $1 = \gamma_4(T_{4, 9}) = \vartheta(T_{4, 9}) -1$ by employing a non-pinch band move that sends $T_{4, 9}$ to the Stevedore knot $6_1$.

In unpublished work, Tairi \cite{tairi2020} provided another counterexample with the torus knot $T_{4, 11}$, proving that $1 = \gamma_4(T_{4, 11}) = \vartheta(T_{4, 11}) - 1$ via a non-pinch band move that also reduces $T_{4, 11}$ to $6_1$. Tairi further computed that $\gamma_4 (T_{4m + 4, 12mb + 6b + 5 - 2m}) = b$ for all $b \geq 2$ and $m \geq 0$, with $\vartheta(T_{4m + 4, 12mb + 6b + 5 - 2m}) = b+1$. Longo \cite{longo2020infinitefamilycounterexamplesbatsons} generalized Tairi and Lobb's examples by showing $ \gamma_4(T_{4n, (2n\pm 1)^2 })\leq 2n-1$ while $\vartheta(T_{4n, (2n\pm 1)^2 }) = 2n$ for all $n \geq 2$. His examples were primarily inspired by the knot $T_{8, 25}$, for which he found three band moves that turn $T_{8, 25}$ into the slice knot $10_3$. Binns, Kang, Simone, and Truöl \cite{binns2024nonorientablefourballgenustorus} extended Longo's work by showing that $ \gamma_4(T_{4n, (2n\pm 1)^2 })= 2n-1$ when $n \leq 2$ even. They also showed that for even $n \geq 2$ and $k \geq 0$, that $\gamma_4(T_{4n + 2k, (4n+2k)(n\pm 1) +1}) = 2n+k-1 = \vartheta(T_{4n + 2k, (4n+2k)(n\pm 1) +1}) - 1$. 

\subsection*{Results}
In this paper, we extend previous work by  using a combination of band surgeries and known bounds to prove the following theorem. 

\begin{theorem}
    The torus knots $T_{4n, (2n\pm 1)^2 + 4n-2}$ for $n \geq 2$ are counterexamples to Batson's conjecture. In particular, 
    \begin{enumerate}
        \item $\gamma_4(T_{4n, (2n\pm 1)^2 + 4n-2}) = 2n-1$ 
        \item $\vartheta(T_{4n, (2n\pm 1)^2 + 4n-2}) = 2n$. 
    \end{enumerate}

    This also holds for the knot $T_{4, 11}$, which lies in the family $T_{4n, (2n + 1)^2 + 4n-2}$ when $n = 1$. 
\end{theorem}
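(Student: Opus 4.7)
The plan is to establish each of the two equalities by exhibiting matching upper and lower bounds, leveraging the band-surgery techniques developed by Longo, Tairi, and Binns-Kang-Simone-Truöl. For the upper bound $\gamma_4(T_{4n, (2n\pm 1)^2 + 4n-2}) \leq 2n-1$, I will construct an explicit sequence of $2n-1$ band moves beginning at the torus knot and terminating at a slice knot (or the unknot); the resulting non-orientable surface in $B^4$ then realizes the desired genus. Observing the identity $(2n\pm 1)^2 + 4n-2 = (2n\pm 1)^2 + 2(2n-1)$, this family sits at a controlled offset from the Longo family $T_{4n, (2n\pm 1)^2}$, which suggests combining one carefully chosen non-pinch band move --- generalizing Longo's move on $T_{8,25}$ and Tairi's on $T_{4,11}$ --- with a sequence of pinch moves. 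The key point is that a single non-pinch band should do the work of two pinch moves, saving one band overall and hence producing the strict inequality $\gamma_4 < \vartheta$ at the heart of the Batson counterexample.

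For the matching lower bound $\gamma_4 \geq 2n-1$, I will apply standard invariant-theoretic obstructions: the signatures $\sigma(T_{p,q})$ are explicitly computable and grow linearly in $n$ for this family, and Batson's inequality relating $\gamma_4(K)$ to $\sigma(K)$ and to the $d$-invariant of the double branched cover $\Sigma(K)$ should force the desired bound. Where these are not immediately tight, the refinements used by Binns-Kang-Simone-Truöl to handle the neighboring family $T_{4n, (2n\pm 1)^2}$ are expected to carry over with only minor modifications to the signature and $d$-invariant computations.

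For part (2), the upper bound $\vartheta(T_{4n, (2n\pm 1)^2 + 4n-2}) \leq 2n$ will follow from a direct construction of $2n$ pinch moves that trivialize the knot; pinch moves act predictably on the $(p,q)$-parameters of a torus knot diagram, and iterating them in an appropriate pattern reduces the knot to the unknot. The matching lower bound is subtler, since part (1) alone yields only $\vartheta \geq 2n-1$; to rule out $\vartheta = 2n-1$, I expect to use either an existing lower bound on $\vartheta(T_{p,q})$ in terms of $p$ and $q$, or an obstruction (for instance, a signature-jump computation under pinch moves) that distinguishes pinch-move sequences from general band-move sequences.

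The principal obstacle is the upper-bound construction in part (1): locating the specific non-pinch band move that efficiently simplifies $T_{4n, (2n\pm 1)^2 + 4n-2}$. I expect to discover it by analyzing the small case $n=1$ (the knot $T_{4,11}$, with Tairi's known band move to $6_1$) and the case $n=2$ (building on Longo's band picture for $T_{8,25}$), then verifying by induction or by a uniform diagrammatic argument that the pattern extends to all $n \geq 1$. Once this band move is in hand, the signature lower bound, the pinch-move upper bound for $\vartheta$, and the strict inequality $\vartheta > \gamma_4$ should all follow by routine computation.
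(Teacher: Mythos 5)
Your outline leaves the actual content of part (1) unproven. The heart of the theorem is the upper bound $\gamma_4(T_{4n,(2n\pm 1)^2+4n-2})\leq 2n-1$, and your proposal defers exactly this step (``I expect to discover it by analyzing the small case\dots''). In the paper this bound comes from a concrete construction: one attaches $2n-1$ unoriented bands of Longo's type, joining strand $i$ to strand $4n-i$ but placed on the opposite side of the twist region from Longo's bands, and then shows by an explicit isotopy --- pushing each joined pair through the $n\pm 1$ full twists and the $4n-1$ partial twists --- that the resulting knot coincides with Longo's knot $K_1$, the closure of the rational tangle $\tau\left(\frac{2n}{4n(n\pm 1)-1}\right)$, which is slice by the Casson--Gordon--Conway criterion (or Lisca's classification). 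Your proposed scheme is also structurally different from what works: you suggest a single exceptional non-pinch band (generalizing Lobb/Tairi) followed by pinch moves terminating at the unknot or a slice knot, whereas the successful construction uses $2n-1$ bands all of the same non-pinch type and terminates at a slice two-bridge knot, not the unknot; it is far from clear that a ``one clever band plus pinches'' scheme exists for $n\geq 2$. Without the explicit bands and the identification of the resulting knot as slice, the inequality $\gamma_4\leq 2n-1$ --- and hence the counterexample to Batson's conjecture --- is not established.

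Two further points. For the lower bound $\gamma_4\geq 2n-1$, signature and Batson-type $d$-invariant bounds are not known to be strong enough for this family --- that is precisely why Binns--Kang--Simone--Tru\"ol had to invoke involutive knot Floer homology --- so ``should force the desired bound'' is unjustified as stated. What rescues this step is that the family satisfies $q\equiv p-1$ or $2p-1 \pmod{2p}$ (here $q=4n^2-1$ or $4n^2+8n-1$ with $p=4n$), so Theorem \ref{1.7_binns} applies verbatim, with no modification, and already gives $\gamma_4\in\{2n-1,2n\}$; you should cite it rather than plan to rederive it from signatures. For part (2), your worry about a separate lower bound for $\vartheta$ is moot: a pinch move applied to a torus knot yields a uniquely determined torus knot (Lemma \ref{lem_pinch_move}), so $\vartheta$ is computed exactly by the resulting recursion; since $q\equiv -1\pmod{4n}$, Lemma \ref{pinch-move-calculation-theorem} gives $\vartheta=2n$ directly, with no need for signature-jump obstructions.
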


We further generalize this result to the following: 

\begin{corollary}
    $\gamma_4(T_{4n + 2k, n(4n + 2k) - 1}) = \gamma_4(T_{4n + 2k, (n+2)(4n + 2k) - 1}) = 2n-1+k$ for all $n \geq 1$ and $k \geq 0$. 
\end{corollary}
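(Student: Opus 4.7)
The plan is to prove the corollary by matching upper and lower bounds. The upper bound comes by induction on $k$, using Theorem 1.1 as the base case and reducing from $k$ to $k-1$ via a single band move. The lower bound comes from a standard algebraic invariant applied directly to the torus knot.

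For the base case $k = 0$, the identities
\[
(2n-1)^2 + 4n - 2 = n(4n) - 1 \quad \text{and} \quad (2n+1)^2 + 4n - 2 = (n+2)(4n) - 1
\]
identify the two families at $k = 0$ with the knots covered by Theorem 1.1, giving $\gamma_4 = 2n - 1$ for $n \geq 2$. For $n = 1$, the first family specializes to $T_{4, 3}$ with $\gamma_4 = 1$ by Allen \cite{allen2020nonorientablesurfacesboundedknots}, and the second specializes to $T_{4, 11}$ with $\gamma_4 = 1$ by Theorem 1.1 (as noted explicitly in its statement). Observe also that for $n = 1$, the first family is $T_{4 + 2k, 3 + 2k} = T_{2(k+2), 2(k+2) - 1}$, so these cases reduce to Batson's family from \cite{batson2012nonorientablefourballgenusarbitrarily} and are already known.

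For the inductive step, the goal is to exhibit a single band move from $T_{4n + 2k, n(4n+2k) - 1}$ to $T_{4n + 2(k-1), n(4n + 2(k-1)) - 1}$, and analogously in the $(n+2)$-family. This band must simultaneously reduce the braid index by $2$ and the winding parameter by $2n$; the natural candidate is a band that pinches off a complete cable layer in the standard braid presentation, in the spirit of the non-pinch bands of Lobb, Tairi, Longo, and Binns-Kang-Simone-Truöl. Since each band move changes $\gamma_4$ by at most $1$, iterating $k$ times and combining with the base case yields $\gamma_4 \leq (2n - 1) + k$. For the matching lower bound, I would apply a standard algebraic bound — the signature inequality used by Batson \cite{batson2012nonorientablefourballgenusarbitrarily} or a Heegaard Floer bound involving $\upsilon$ or $\nu^+$ in the style of \cite{binns2024nonorientablefourballgenustorus} — directly to $T_{4n + 2k, n(4n+2k) - 1}$. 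Because the signature and $\upsilon$-invariants of torus knots admit explicit closed-form expressions, this reduces to verifying that the relevant invariant combination attains exactly $2n - 1 + k$ for these parameters.

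The main obstacle will be the explicit construction of the band move in the inductive step: it must produce a torus knot of exactly the prescribed type, not a link or a more complicated knot, and it must be compatible with both parameter reductions at once. I expect this to require case analysis on the parities of $n$ and $k$ together with a careful diagrammatic argument modeled on Proposition 4.6 of \cite{binns2024nonorientablefourballgenustorus} and the band constructions in \cite{longo2020infinitefamilycounterexamplesbatsons, tairi2020}. A secondary difficulty is confirming that the chosen algebraic lower bound is sharp uniformly across the full two-parameter family rather than just in the base case $k = 0$ already handled by Theorem 1.1.
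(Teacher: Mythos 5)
Your overall skeleton matches the paper's: identify the $k=0$ knots with the family of Theorem 1.1 via the identities $(2n\mp1)^2+4n-2 = n(4n)-1,\ (n+2)(4n)-1$, reduce the general case to $k=0$ by $k$ band moves (each changing $\gamma_4$ by at most $1$), and close the gap with a lower bound. Your treatment of $n=1$ in the first family (it is Batson's family $T_{2(k+2),2(k+2)-1}$) is a correct and worthwhile observation, since the paper's Section 4 statement only asserts that case for $n\geq 2$. However, two genuine gaps remain. First, the inductive step you flag as ``the main obstacle'' is left unproved, and your proposed route (a new non-pinch band pinching off a ``cable layer,'' with parity case analysis) is the wrong candidate: the required move is an ordinary pinch move. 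Since $q = n(4n+2k)-1 \equiv -1 \pmod{4n+2k}$ (and likewise in the $(n+2)$-family), Lemma \ref{lem_pinch_move} (or Lemma \ref{pinch-move-calculation-theorem} with $q = kp-1$) gives $t=1$, $h=n$ (resp. $h=n+2$), so a single pinch move sends $T_{4n+2k,\,n(4n+2k)-1}$ to $T_{4n+2(k-1),\,n(4n+2(k-1))-1}$; no new band construction or parity analysis is needed, but as written your proposal does not supply this step.

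Second, your lower bound is not secure. You propose evaluating the signature bound of Batson or closed-form $\upsilon$/$\nu^+$ bounds directly on $T_{4n+2k,\,n(4n+2k)-1}$ and checking they attain $2n-1+k$; but these classical bounds are generally not sharp for torus knots with $q\equiv -1 \pmod p$ — that is precisely why Binns--Kang--Simone--Tru\"ol needed involutive knot Floer homology to prove Theorem \ref{1.7_binns}. The paper simply invokes that theorem: since $q \equiv p-1$ or $2p-1 \pmod{2p}$ here (with $q>p$ once $n\geq 2$), it gives $\gamma_4 \geq \vartheta - 1 = 2n+k-1$, which matches the upper bound. Your own hedge (``confirming that the chosen algebraic lower bound is sharp uniformly across the full two-parameter family'') is exactly where the argument would stall; to complete the proof you should replace the signature/$\upsilon$ computation by a citation of Theorem \ref{1.7_binns}, together with the pinch-number computation $\vartheta = 2n+k$ from Lemma \ref{pinch-move-calculation-theorem} (and, for the $n=1$ first family where $q=p-1$ falls outside that theorem's hypotheses, Batson's result as you noted).
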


Consequently, all knots in this family are counterexamples to Batson's conjecture.  

\subsection*{Acknowledgments}

The author would like to express her sincere gratitude to Peter Ozsváth for his invaluable guidance and generosity with his time throughout this research. The author would additionally like to thank Andrew Lobb, Ollie Thakar, and Fraser Binns for insightful discussions that contributed significantly to the development of this work. Finally, the author thanks Cornelia Van Cott and Stanislav Jabuka for valuable comments and correspondences. 

\section{Tools}

In this section, we lay out the tools necessary for our calculations. We first have the following lemma from Jabuka and Van Cott \cite{Jabuka_2021}
\begin{lemma} \label{lem_pinch_move}
    Let $p$ and $q$ be relatively prime positive integers. After applying a pinch move to $T_{p,q}$, the resulting torus knot (up to orientation) is $T_{|p - 2 t|, |q - 2 h|}$, where $t$ and $h$ are the integers uniquely determined by the requirements 
    \begin{equation*}
        t \equiv -q^{-1} \pmod{p} \quad  \text{ where }  t \in  \{0, \dots , p - 1\}
    \end{equation*}
    \begin{equation*}
        h \equiv p^{-1} \pmod{q} \quad \text{ and } h \in \{0, \dots , q - 1\}
    \end{equation*}
\end{lemma}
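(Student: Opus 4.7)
The plan is to realize the pinch move geometrically on the standard embedding of $T_{p,q}$ into the unknotted torus $T^2 \subset S^3$, and then identify the resulting knot as another torus curve by tracking its homology class. Viewing $T_{p,q}$ as the image of a line of slope $q/p$ in $T^2 = \R^2/\Z^2$, I would write $[T_{p,q}] = p\mu + q\lambda$ for chosen generators $\mu, \lambda$ of $H_1(T^2; \Z)$. A pinch move, being a band joining two adjacent parallel strands in the standard diagram, can be isotoped to a short arc on $T^2$ joining two spatially adjacent strands at a fixed meridional slice. Performing saddle surgery along this band keeps the resulting $1$-manifold on $T^2$ (after a small perturbation), and since a pinch move on a knot yields a knot, it is a single simple closed curve on $T^2$, hence another torus knot $T_{p',q'}$.

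The crux, and the step I expect to require the most care, is identifying $(p',q')$ with $(|p-2t|, |q-2h|)$ for the specific $t,h$ in the statement. My plan is to label the $q$ punctures where $T_{p,q}$ meets a fixed meridional disk in the order in which the parametrized curve visits them, and observe that the spatially neighboring puncture to puncture $k$ sits at index $k+h$ in the visit order, where $ph \equiv 1 \pmod q$ is precisely the congruence characterizing this cyclic shift. Thus the band, which joins two spatially adjacent strands, in fact reroutes the knot by swapping a long subarc running over $h$ cyclic steps, and a direct computation of how this swap affects the longitudinal wrapping number yields the change $q \mapsto q - 2h$. A parallel argument on a longitudinal disk, with the roles of $p$ and $q$ reversed, produces $p \mapsto p - 2t$; the sign in $t \equiv -q^{-1} \pmod p$ accounts for the reversed orientation of the rerouted subarc relative to the longitudinal framing.

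Taking absolute values accounts for the possibility that the resulting curve runs opposite to the conventional orientation on $T_{p',q'}$, and coprimality of $|p-2t|$ and $|q-2h|$ is automatic since a pinch move takes a knot to a knot. The main obstacle throughout will be keeping careful track of signs and of the cyclic ordering on the meridional and longitudinal slices, and ensuring that the saddle surgery on $T^2$ genuinely yields a connected curve rather than splitting into components (which follows from the pinch-move convention but must be verified in coordinates).
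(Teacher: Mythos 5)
This lemma is not proved in the paper at all: it is quoted from Jabuka and Van Cott \cite{Jabuka_2021}, so there is no internal proof to compare against. Your outline in fact reconstructs the standard argument from that source: realize the pinch band as a band lying on the standard torus, observe that the surgered curve is again embedded in $T^2$, and identify it by its homology class. Within that outline, two points deserve care. First, the bookkeeping you defer to ``a direct computation'' is cleanest if phrased as orientation reversal rather than ``swapping'': the non-coherent reconnection reverses the orientation of exactly one of the two subarcs into which the band endpoints divide the knot, and that subarc, closed up along the band core, represents a class $(t,h)\in H_1(T^2;\Z)$ with $qt-ph=-1$; hence the new curve represents $(p-2t,\,q-2h)$ in one stroke. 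Deriving $q\mapsto q-2h$ from a meridional slice and $p\mapsto p-2t$ from a separate longitudinal slice, as you propose, risks attaching the two congruences to different subarcs or to mismatched sign conventions; the relation $qt-ph=-1$ (equivalently, $t\equiv -q^{-1}\pmod p$ and $h\equiv p^{-1}\pmod q$ with $0\le t<p$, $0\le h<q$) is precisely what ties them to a single arc, and it also gives coprimality of $|p-2t|$ and $|q-2h|$ directly, since $\gcd(p-2t,q-2h)$ divides $q(p-2t)-p(q-2h)=2(ph-qt)=2$ while $p-2t$ and $q-2h$ cannot both be even; this avoids any circular appeal to the surgered curve being a knot. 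Second, your connectedness concern is settled by the general fact that a non-coherent band move on a knot yields a knot, and your puncture-index claim ($j\mapsto j+h$ with $ph\equiv 1\pmod q$) checks out; note also that the resulting curve may be inessential (e.g.\ pinching $T_{2,3}$ gives the class $(0,\pm 1)$), which is consistent with the statement since $T_{|p-2t|,|q-2h|}$ is then the unknot. With these points made precise, your plan is sound and is essentially the proof given in the cited reference.
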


Using Lemma \ref{lem_pinch_move}, Binns, Kang, Simone and Truöl in \cite{binns2024nonorientablefourballgenustorus},  calculated the pinch number for the following torus knot families: 

\begin{lemma}(Lemma 2.9 in \cite{binns2024nonorientablefourballgenustorus}) \label{pinch-move-calculation-theorem} Let $p \geq 2, k \geq 1$. Performing a non-oriented band move on the torus knot $T_{p, kp \pm 1}$ yields the torus knot $T_{p-2, k(p-2) \pm 1}$. Consequently,
\begin{equation}
    \vartheta(T_{p, kp \pm 1}) = \begin{cases}
        \frac{p-1}{2} & \text{if } p \text{ is odd} \\
        \frac{p}{2} & \text{if } p \text{ is even and } kp\pm 1 \neq p-1 \\
        \frac{p-2}{2} & \text{if } p \text{ is even and } kp\pm 1 = p-1 \\
    \end{cases}
\end{equation}
\end{lemma}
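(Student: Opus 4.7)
The plan is to prove both assertions by a direct modular computation together with an iteration argument. For the first claim — that a pinch move on $T_{p,kp\pm1}$ produces $T_{p-2,k(p-2)\pm1}$ — I would apply Lemma \ref{lem_pinch_move} explicitly. In the $q = kp+1$ case, the congruence $q \equiv 1 \pmod{p}$ immediately gives $t \equiv -1 \equiv p-1 \pmod{p}$, hence $t = p-1$; and the identity $pk \equiv -1 \pmod{q}$ gives $p^{-1} \equiv -k \pmod{q}$, so $h = q - k = k(p-1) + 1$. Substituting into $T_{|p-2t|, |q-2h|}$ and simplifying the absolute values yields $|p-2t| = p-2$ and $|q-2h| = |{-kp + 2k - 1}| = k(p-2)+1$, as claimed. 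The $q = kp-1$ case is symmetric: since $q \equiv -1 \pmod{p}$ one gets $t = 1$, and since $pk \equiv 1 \pmod{q}$ one gets $h = k$, producing $T_{p-2, k(p-2)-1}$ after simplification.

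Next, I would iterate this pinch calculation to obtain the formula for $\vartheta$. Since Lemma \ref{lem_pinch_move} determines the output of a pinch move on a torus knot uniquely (up to orientation), the sequence of torus knots obtained by repeatedly pinching $T_{p,kp\pm1}$ is forced, so the minimum number of pinches to reach the unknot equals the length of this forced sequence. Each pinch decreases $p$ by exactly $2$ while preserving the form $k(p-2)\pm 1$ of the second coordinate. When $p$ is odd, after $(p-1)/2$ pinches the sequence ends at $T_{1, k\pm 1}$, which is the unknot, yielding $\vartheta = (p-1)/2$. When $p$ is even, after $(p-2)/2$ pinches we reach $T_{2, 2k\pm 1}$; this is itself the unknot precisely when $2k\pm 1 = 1$, which is equivalent to $kp\pm 1 = p-1$ (i.e.\ $k=1$ with the minus sign), giving $\vartheta = (p-2)/2$ in this case, and otherwise $T_{2, 2k\pm 1}$ is a nontrivial $(2,\text{odd})$ torus knot requiring one further pinch, yielding $\vartheta = p/2$.

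The only delicate points are the arithmetic bookkeeping of absolute values in small cases (e.g.\ $p=2$, $k=1$) and an explicit justification that a pinch move on $T_{p,q}$ is unique up to the cyclic symmetry of the standard diagram — this is exactly what promotes the iterated pinch sequence from merely an upper bound into the equality defining $\vartheta$. I do not expect a substantive obstacle beyond these careful checks, since Lemma \ref{lem_pinch_move} already packages the main computational content.
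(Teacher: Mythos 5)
Your argument is correct: the modular computations giving $t=p-1$, $h=k(p-1)+1$ in the $+1$ case and $t=1$, $h=k$ in the $-1$ case are right, and the iteration down to $T_{1,\cdot}$ or $T_{2,2k\pm1}$ gives exactly the stated case analysis for $\vartheta$. Note that the paper itself offers no proof — it imports this as Lemma 2.9 of the cited reference — and your route (direct application of Lemma \ref{lem_pinch_move} plus the forced pinch sequence, whose uniqueness is already built into that lemma's statement) is essentially the standard derivation used there.
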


Using Lemma \ref{pinch-move-calculation-theorem}, tools from involutive knot Floer homology, and previous bounds, they found the following narrow bounds on $\gamma_4$: 
\begin{theorem}[Theorem 1.7 in \cite{binns2024nonorientablefourballgenustorus}] \label{1.7_binns}
    Fix $p > 3$. If $q \equiv p-1, p+1$, or $ 2p-1 \pmod{2p}$, then $\vartheta(T_{p, q} ) - 1 \leq \gamma_4(T_{p, q}) \leq \vartheta(T_{p, q})$. In particular, we have the following: 
    \begin{enumerate}[nosep]
        \item[(i)] Let $p$ be odd. 
            \begin{itemize}
                \item If $q \equiv p - 1 \pmod{2p}$, then $\gamma_4(T_{p, q}) \in \{\frac{p-3}{2}, \frac{p-1}{2}\}$. 
                \item If $q \equiv p + 1$ or $ 2p-1 \pmod{2 p}$, then $\gamma_4(T_{p, q}) = \frac{p-1}{2}$. 
            \end{itemize}
        \item[(ii)] Let $p$ be even. 
            \begin{itemize}
                \item If $q > p$ and $q \equiv p - 1, p+1 $ or $2p-1\pmod{2p}$, then $\gamma_4(T_{p, q}) \in \{\frac{p-2}{2}, \frac{p}{2}\}$
            \end{itemize}
    \end{enumerate}
\end{theorem}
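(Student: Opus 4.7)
My plan is to prove the inequalities $\vartheta(T_{p,q}) - 1 \leq \gamma_4(T_{p,q}) \leq \vartheta(T_{p,q})$ in turn, then specialize via Lemma \ref{pinch-move-calculation-theorem} to extract the explicit bounds in each case. The upper bound is essentially formal, while the lower bound requires Heegaard Floer input, so I expect the bulk of the work to fall on the lower bound.

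First I would establish $\gamma_4 \leq \vartheta$, which is immediate from the definition: $\vartheta(T_{p,q})$ pinch moves (each a non-orientable band attachment) reduce $T_{p,q}$ to the unknot, so these bands together with a trivial disk in $B^4$ assemble into a non-orientable surface of first Betti number $\vartheta(T_{p,q})$ bounded by $T_{p,q}$. For the explicit numerical formulas in each residue class, I would simply write $q = kp \pm 1$ for the appropriate sign and integer $k \geq 1$, and read off $\vartheta(T_{p,q}) = (p-1)/2$ when $p$ is odd and $\vartheta(T_{p,q}) = p/2$ when $p$ is even with $q > p$ directly from Lemma \ref{pinch-move-calculation-theorem}.

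For the lower bound $\gamma_4 \geq \vartheta - 1$, my approach, modeled on \cite{binns2024nonorientablefourballgenustorus}, would be to invoke a Floer-theoretic obstruction — specifically, a numerical invariant arising from involutive knot Floer homology that bounds $\gamma_4$ from below. The knot Floer complex of $T_{p,q}$ has a well-understood staircase structure determined by the Alexander polynomial, so such invariants are in principle explicitly computable. For $q$ in each of the three specified residue classes, I would expect the computation to return $\vartheta(T_{p,q}) - 1$; in the tight odd-$p$ case with $q \equiv p+1$ or $2p-1 \pmod{2p}$, a sharper computation should boost the bound to $\vartheta(T_{p,q})$, pinning down $\gamma_4$ exactly and matching the upper bound. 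The main obstacle is the involutive Floer computation itself: explicitly describing the $\iota$-equivariant structure on the knot Floer complex of $T_{p,q}$ and extracting the numerical lower bound in each residue class. The tight-versus-loose case distinction (exact value $\vartheta$ in one odd-$p$ subcase, versus only a size-two set $\{\vartheta - 1, \vartheta\}$ in the others) reflects a genuine asymmetry in the involutive data and demands a careful case analysis, likely supplemented by classical signature or $\Upsilon$-type bounds to verify sharpness at the extremes.
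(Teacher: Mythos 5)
First, note that the paper does not prove this statement at all: it is quoted verbatim as Theorem 1.7 of \cite{binns2024nonorientablefourballgenustorus} and used as a black box, so there is no internal proof to compare against. Within that caveat, parts of your outline are fine and do track how the cited authors proceed: the upper bound $\gamma_4(T_{p,q}) \leq \vartheta(T_{p,q})$ is indeed formal (each pinch move is a non-orientable band move, so $\vartheta$ of them followed by a disk bounding the unknot assemble into a non-orientable surface $F$ with $\dim H_1(F;\Z/2\Z) = \vartheta$), and your observation that each of the residue classes $q \equiv p-1,\ p+1,\ 2p-1 \pmod{2p}$ has the form $q = kp \pm 1$, so that Lemma \ref{pinch-move-calculation-theorem} yields $\vartheta = \frac{p-1}{2}$ for $p$ odd and $\vartheta = \frac{p}{2}$ for $p$ even with $q > p$, is correct.

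The genuine gap is that the substantive half of the theorem is never argued. The lower bound $\vartheta(T_{p,q}) - 1 \leq \gamma_4(T_{p,q})$, and the strictly stronger bound $\gamma_4 \geq \vartheta$ needed to pin down the exact value $\frac{p-1}{2}$ in the odd-$p$ case with $q \equiv p+1$ or $2p-1 \pmod{2p}$, are only described as what an involutive-Floer computation is ``expected'' to return: you do not name the invariant, state the inequality relating it to $\gamma_4$, or set up the computation on the staircase complexes, and you explicitly defer the sharp case to ``a sharper computation'' or unspecified signature/$\Upsilon$-type bounds. Since the upper bound is elementary, this deferred lower bound is precisely the content of the theorem, so the proposal does not constitute a proof. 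If the goal is simply to use the statement, the right move is the one the paper makes — cite \cite{binns2024nonorientablefourballgenustorus}, where the lower bounds are obtained from involutive knot Floer homology together with previously known bounds — rather than to gesture at rederiving it.
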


We also have the following proposition from Jabuka and Kelly in \cite{Jabuka_2018}.  
\begin{proposition}(Proposition 2.4 in \cite{Jabuka_2018}) \label{prop_bound} If the knots $K$ and $K'$ are related by a non-oriented band move, then 
\begin{equation}
    \gamma_4(K) \leq \gamma_4(K') + 1. 
\end{equation}
If a knot $K$ is related to a slice knot $K'$ by a non-oriented band move, then $\gamma_4(K) = 1$. 

\end{proposition}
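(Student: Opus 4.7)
The plan is to construct an efficient non-orientable surface for $K$ by extending a minimal non-orientable surface for $K'$ with the band of the band move, then read off the bound via a routine Euler characteristic computation. Concretely, I would choose a non-orientable $F' \hookrightarrow B^4$ with $\partial F' = K'$ realizing $\dim H_1(F'; \Z/2\Z) = \gamma_4(K')$. The band move occurs in $S^3 = \partial B^4$ via a rectangle $B \cong I \times I$ attached to $K$ along two disjoint arcs in $\{0,1\} \times I$. Pushing $B$ slightly into a collar $S^3 \times [0, \varepsilon) \subset B^4$ and gluing it to $F'$ along these two attaching arcs produces a properly embedded surface $F := F' \cup B \subset B^4$ with $\partial F = K$.

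The Euler characteristic is then transparent: attaching $B$ along two disjoint arcs in $\partial F'$ is a $2$-dimensional $1$-handle attachment, so $\chi(F) = \chi(F') + 1 - 2 = \chi(F') - 1$. Because $K$ is a knot, $F$ is connected with a single boundary component, and any such compact surface is homotopy equivalent to a wedge of $1 - \chi(F)$ circles, giving
\[
\dim H_1(F; \Z/2\Z) \;=\; 1 - \chi(F) \;=\; 2 - \chi(F') \;=\; 1 + \dim H_1(F'; \Z/2\Z) \;=\; \gamma_4(K') + 1.
\]

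The subtle step is verifying that $F$ is non-orientable, so that it is admissible in the definition of $\gamma_4(K)$. The hypothesis that the band move is \emph{non-oriented} is precisely the statement that the local model formed by $B$ together with a collar neighborhood of $K'$ is a M\"{o}bius band rather than an annulus; since $F$ contains this non-orientable piece, $F$ is itself globally non-orientable. Combining with the Euler characteristic calculation yields $\gamma_4(K) \leq \gamma_4(K') + 1$.

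For the second assertion, take $K'$ slice and $F'$ a slice disk; then $F$ becomes a M\"{o}bius band bounding $K$, forcing $\gamma_4(K) \leq 1$. Upgrading to the equality $\gamma_4(K) = 1$ requires $K$ itself to be non-slice, which I expect is enforced either by an implicit hypothesis or by a classical sliceness obstruction in the setting where the proposition is applied. The main potential obstacle in the whole argument is the non-orientability check: one must make the local model argument uniformly watertight so that $F$ is non-orientable regardless of the choice of $F'$; once that is in hand, the rest is Euler characteristic bookkeeping.
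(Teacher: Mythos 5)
This proposition is not proved in the paper at all — it is quoted from Jabuka--Kelly — and your construction (push a minimizing surface $F'$ for $K'$ into $B^4$, glue on the trace of the band move inside a collar of $S^3$, and do the Euler characteristic count $\chi(F)=\chi(F')-1$, $\dim H_1(F;\Z/2\Z)=1-\chi(F)$) is exactly the standard argument behind the cited result, so in substance you have reproduced the intended proof of the inequality $\gamma_4(K)\leq\gamma_4(K')+1$. Two remarks. The non-orientability step you single out as subtle is easier than you make it: if $K'$ is not slice, the minimizing $F'$ is already non-orientable by the definition of $\gamma_4$, so $F\supset F'$ is non-orientable regardless of how the band attaches; and if $F'$ is orientable (the slice-disk case), then $\chi(F')$ is odd, so $\chi(F)$ is even, while a connected orientable surface with one boundary circle has odd Euler characteristic — so $F$ is non-orientable by parity alone. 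Your local-model observation is also correct in spirit (an incoherently attached band makes collar-plus-band non-orientable, and incoherence is automatic here since a coherent band move on a knot produces a two-component link), though the union is a once-punctured M\"obius band rather than a M\"obius band. Finally, your hedge about the equality $\gamma_4(K)=1$ points at a genuine convention issue rather than a missing obstruction: under the convention used in this paper ($\gamma_4=0$ for slice knots) the second sentence does implicitly require $K$ to be non-slice (the unknot is obtained from the unknot by a half-twisted, hence non-oriented, band move), whereas in Jabuka--Kelly's formulation $\gamma_4$ is the minimum over non-orientable surfaces for every knot, so $\gamma_4(K)\geq 1$ always and equality follows at once from your upper bound $\gamma_4(K)\leq 1$. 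Since the present paper only ever uses the inequality $\gamma_4(K)\leq\gamma_4(K')+1$, your argument covers everything that is actually needed.
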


\section{Proof of Main Theorem}

\begin{theorem}\label{thm_main}
    The torus knots $T_{4n, (2n\pm 1)^2 + 4n-2}$ for $n\geq 2$ are counterexamples to Batson's conjecture. In particular, 
    \begin{enumerate}
        \item $\gamma_4(T_{4n, (2n\pm 1)^2 + 4n-2}) = 2n-1$ 
        \item $\vartheta(T_{4n, (2n\pm 1)^2 + 4n-2}) = 2n$. 
    \end{enumerate}

    This also holds for the knot $T_{4, 11}$, which lies in the family $T_{4n, (2n + 1)^2 + 4n-2}$ for $n = 1$. 
\end{theorem}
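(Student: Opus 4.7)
Set $p = 4n$ and expand the exponent as $q := (2n\pm 1)^2 + 4n - 2 = 4n^2 \pm 4n + 4n - 1$, which is $np - 1$ in the $-$ case and $(n+2)p - 1$ in the $+$ case. Part (2) is then immediate from Lemma~\ref{pinch-move-calculation-theorem}: $p$ is even and $q > p - 1$, so $\vartheta(T_{p,q}) = p/2 = 2n$.

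For the lower bound in (1), I reduce $q$ modulo $2p = 8n$. Since $4n^2 \equiv 0 \pmod{8n}$ when $n$ is even and $4n^2 \equiv 4n \pmod{8n}$ when $n$ is odd, a short case split shows $q \equiv 2p - 1 \pmod{2p}$ for $n$ even and $q \equiv p - 1 \pmod{2p}$ for $n$ odd, in both sign choices. Theorem~\ref{1.7_binns}(ii) then applies (its hypothesis $q > p$ holds for $n \ge 2$ and for $T_{4,11}$) and forces $\gamma_4(T_{p,q}) \in \{2n - 1, 2n\}$, so in particular $\gamma_4 \ge 2n - 1$.

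The heart of the proof is the matching upper bound $\gamma_4 \le 2n - 1$. The plan is to exhibit a sequence of at most $2n - 1$ band surgeries taking $T_{4n, (2n\pm 1)^2 + 4n - 2}$ to a slice knot, so that iterated use of Proposition~\ref{prop_bound} yields the bound. Writing $q = kp - 1$ with $k \in \{n, n+2\}$, repeated pinch moves computed via Lemma~\ref{lem_pinch_move} (a routine calculation gives $t = 1$ and $h = k$ at each stage) produce the clean chain $T_{p - 2j,\, k(p - 2j) - 1}$, which terminates at a nontrivial $(2, \text{odd})$-torus knot after $2n - 1$ steps. Since the pinch number $\vartheta = 2n$ is one more than the target $2n - 1$, the pinch chain alone cannot close up to a slice knot in the allowed number of moves; the construction must therefore include one strategically placed non-pinch band move that short-circuits the chain into a known slice knot, in the style of Lobb's and Tairi's moves from $T_{4, 9}$ and $T_{4, 11}$ into $6_1$, or Longo's move from $T_{8, 25}$ into $10_3$.

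The principal obstacle is to identify this non-pinch band move and verify that its image is slice. This is the same kind of explicit diagrammatic work as in \cite{lobb2019counterexamplebatsonsconjecture, longo2020infinitefamilycounterexamplesbatsons, tairi2020}: one must describe the band on a torus knot diagram and recognize the resulting knot as a known slice knot, uniformly across the family and for both signs. The base case $T_{4, 11}$ is handled by Tairi; for $n \ge 2$ the non-pinch move must be constructed and checked directly. Once it is in hand, the remaining moves are pinches computed mechanically via Lemma~\ref{lem_pinch_move}, and Proposition~\ref{prop_bound} assembles the final bound.
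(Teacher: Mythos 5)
Your part (2) and the lower bound in part (1) are exactly the paper's argument: identify $q = np - 1$ or $(n+2)p - 1$ with $p = 4n$, apply Lemma~\ref{pinch-move-calculation-theorem} to get $\vartheta = 2n$, and apply Theorem~\ref{1.7_binns} to get $\gamma_4 \in \{2n-1, 2n\}$. But the upper bound $\gamma_4 \le 2n-1$ is the entire content of the theorem, and there you have only stated a plan, not a proof: you yourself write that the decisive non-pinch band move ``must be constructed and checked directly'' for $n \ge 2$. That construction is precisely what is missing, so the proposal has a genuine gap at its central step. Moreover, the architecture you propose --- a chain of pinch moves short-circuited by a single special band move in the style of Lobb or Tairi --- is not obviously realizable: pinching $T_{4n, k(4n)-1}$ down for $2n-2$ steps leaves $T_{4, 4k-1}$, and you would then need $\gamma_4(T_{4,4k-1}) = 1$ uniformly in $k$, which is neither established in the tools you cite nor argued for.

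The paper does something different and fully explicit: it attaches $2n-1$ Longo-style bands (none of them pinch moves) all at once to the standard diagram of $T_{4n, (2n\pm 1)^2 + 4n - 2}$, positioned below the strands rather than above as in Longo's surgery on $T_{4n, (2n\pm 1)^2}$, and then performs a concrete isotopy --- pulling each joined $(i, 4n-i)$ pair through the partial twists and tracking how the pairing indices shift --- to show the resulting knot $K_2$ coincides with Longo's knot $K_1$. Since $K_1$ is the closure of the rational tangle $\tau\left(\frac{2n}{4n(n\pm 1) - 1}\right)$ and hence slice (Casson--Gordon--Conway, or Lisca's classification of $2$-bridge knots), iterating Proposition~\ref{prop_bound} over the $2n-1$ band moves gives $\gamma_4 \le 2n-1$. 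To complete your write-up you would need either to supply this band configuration and isotopy (or an equivalent one) and identify the resulting slice knot, or to prove the sliceness input your alternative route requires; as it stands, the key existence claim is asserted rather than proved.
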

We prove (2) first, and then (1). 

\begin{proof}[Proof of (2)]
    We first calculate the pinch number of all such knots. The knots are either of the form $T_{4n, (2n + 1)^2 + 4n-2} = T_{4n, 4n^2 + 8n - 1}$ or $T_{4n, (2n - 1)^2 + 4n-2} = T_{4n, 4n^2 - 1}$. For both sets of $T_{p, q}$, we have $q > p$ and $q \equiv -1 \pmod{p}$. By Lemma \ref{pinch-move-calculation-theorem}, $\vartheta(T_{4n, (2n\pm 1)^2 + 4n-2}) = 2n$.
\end{proof}

\begin{proof}[Proof of (1)]
    By Theorem \ref{1.7_binns}, we know $\gamma_4(T_{4n, (2n\pm 1)^2 + 4n-2}) \in \{2n-1, 2n\}$. We will show  $$\gamma_4(T_{4n, (2n\pm 1)^2 + 4n-2}) = 2n-1. $$ 

    \begin{figure}[ht!]
    \centering

    \begin{subfigure}{0.40\textwidth}
        \centering
        \includegraphics[width=\linewidth]{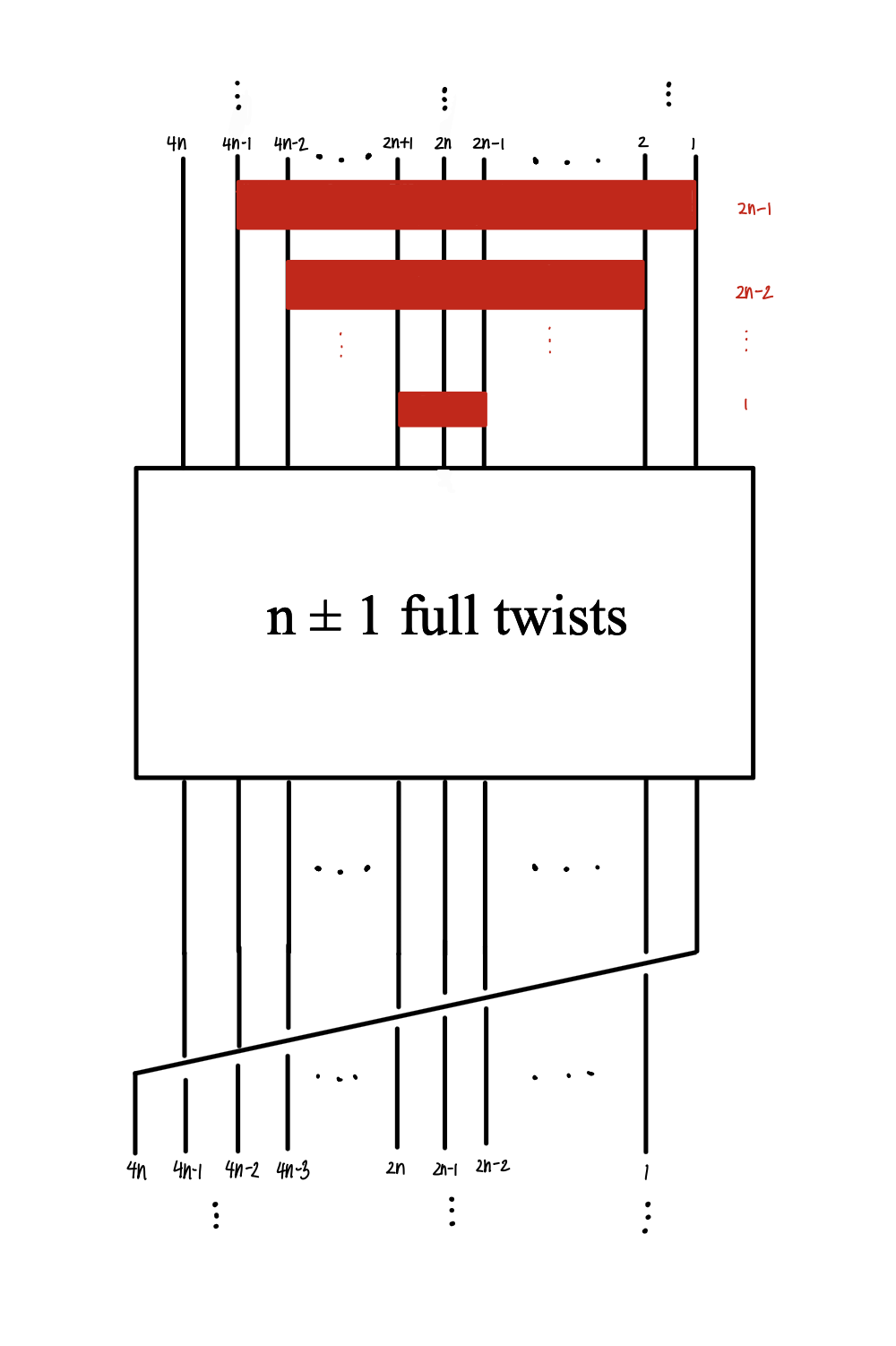}
        \caption{Figure 3 in \cite{longo2020infinitefamilycounterexamplesbatsons}, where $m= n\pm 1$.}
        \label{fig:1}
    \end{subfigure}
    \hspace{0.05\textwidth}
    \begin{subfigure}{0.40\textwidth}
        \centering
        \includegraphics[width=\linewidth]{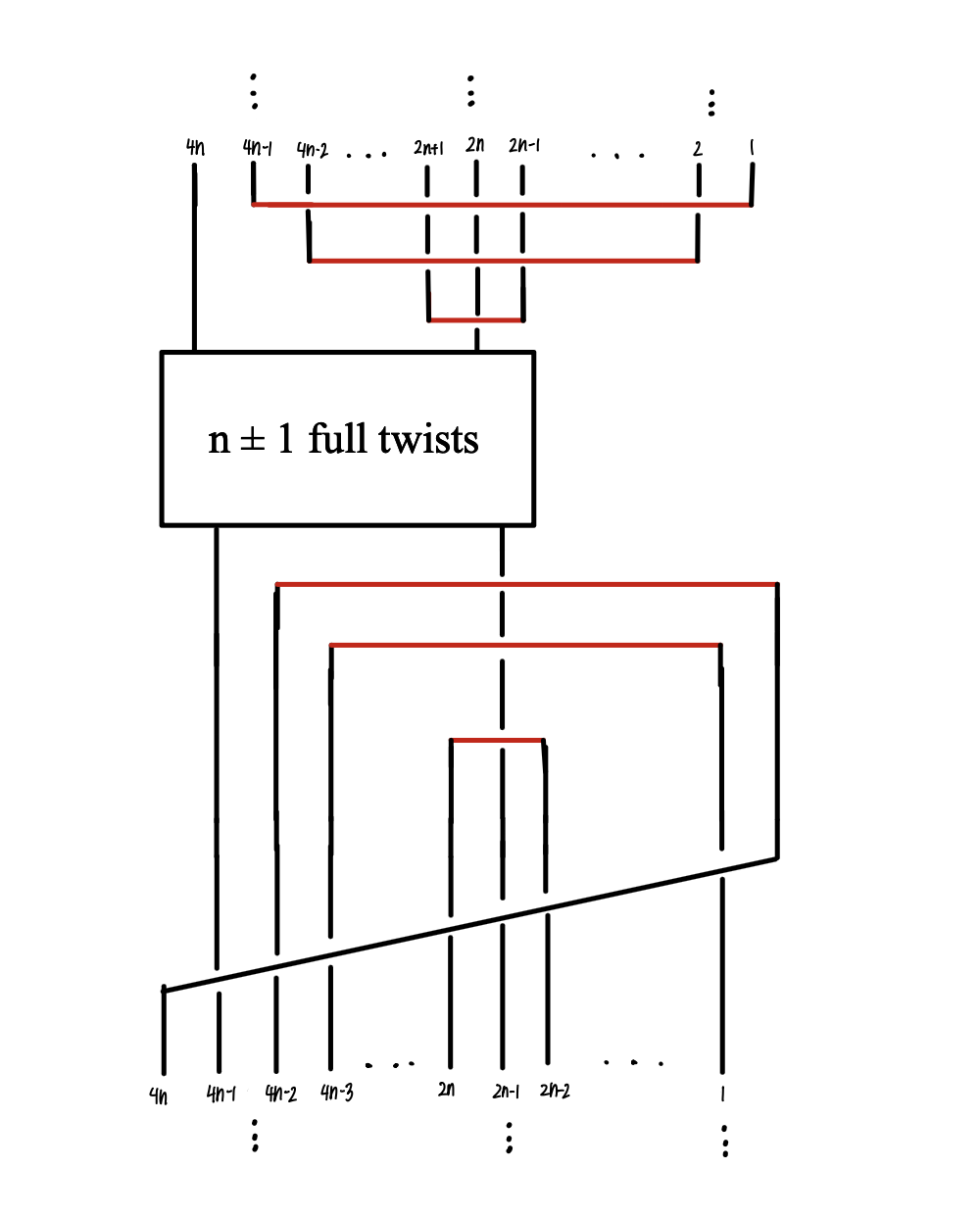}
        \caption{Pulling the joined strands through the $n \pm 1$ full twists. }
        \label{fig:2}
    \end{subfigure}

    \caption{}
    \label{fig_1_and_2}
    \end{figure}    
    
    To do so, we will perform a sequence of unoriented band surgeries on $T_{4n, (2n\pm 1)^2 + 4n-2}$ and demonstrate that the resulting knot is isotopic to the one obtained by Longo's choice of band surgeries on  $T_{4n, (2n\pm 1)^2}$. We use the term \textit{partial twist} to refer to a single strand crossing over all other strands in a torus knot with respect to the standard diagram. For example, each full twist for the knot $T_{p, q}$ consists of $p$ partial twists. Let us now review Longo's choice of bands in \cite{longo2020infinitefamilycounterexamplesbatsons}. The knot $T_{4n, (2n\pm 1)^2}$ has $\frac{(2n\pm 1)^2 - 1 }{4n} = n \pm 1$ full twists with a single additional partial twist.

    For simplicity, we will refer to the position of a strand relative to other strands. For example, in Figure \ref{fig:1}, we label the $4n$ strands in increasing order from right to left. This labeling resets after each partial twist, so that the rightmost strand is always labeled $1$. Performing surgery on each band in Figure \ref{fig:1}, we can isotope the resulting diagram by pulling joined strands through the full twists, yielding Figure \ref{fig:2}. 

    \begin{figure}[ht!]
    \centering

    \begin{subfigure}{0.45\textwidth}
        \centering
        \includegraphics[width=\linewidth]{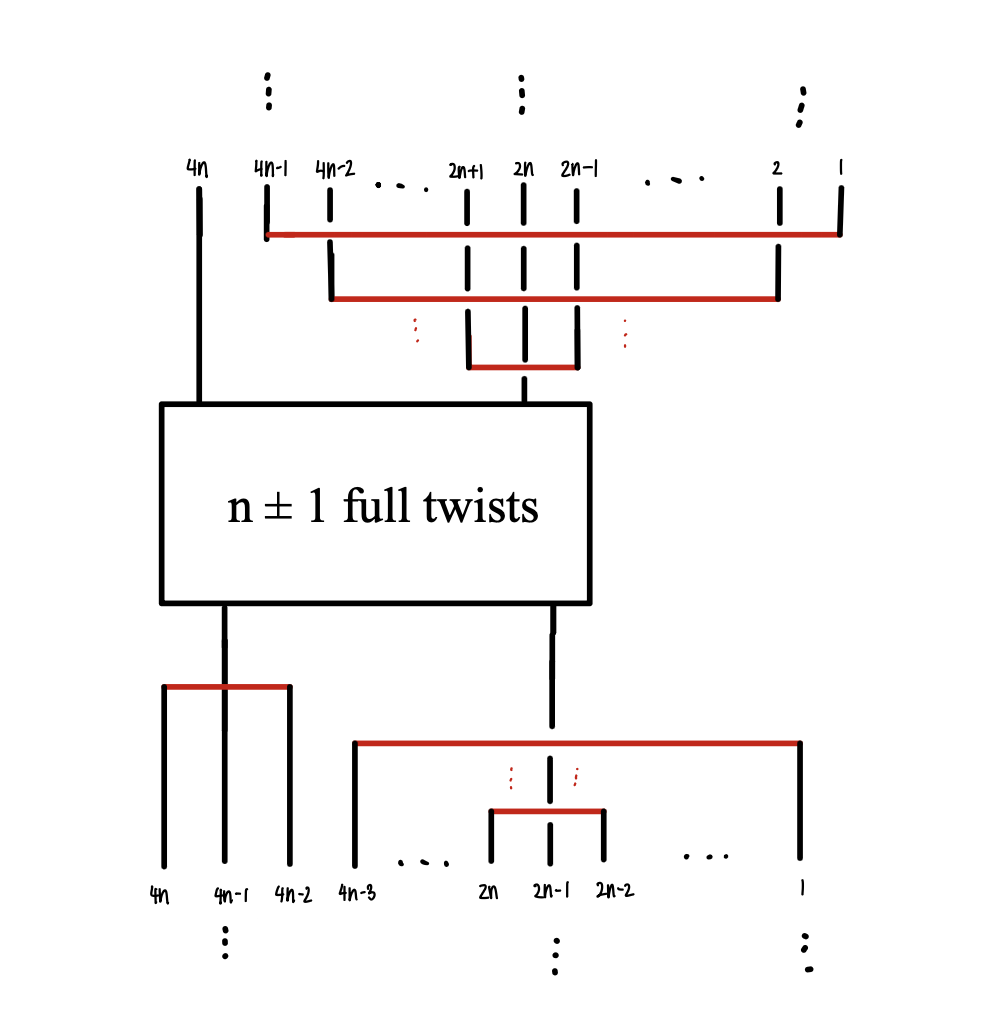}
        \caption{The knot $K_1$, obtained by pulling joined strands through the final partial twist. }
        \label{fig:3}
    \end{subfigure}
    \hfill
    \begin{subfigure}{0.45\textwidth}
        \centering
        \includegraphics[width=\linewidth]{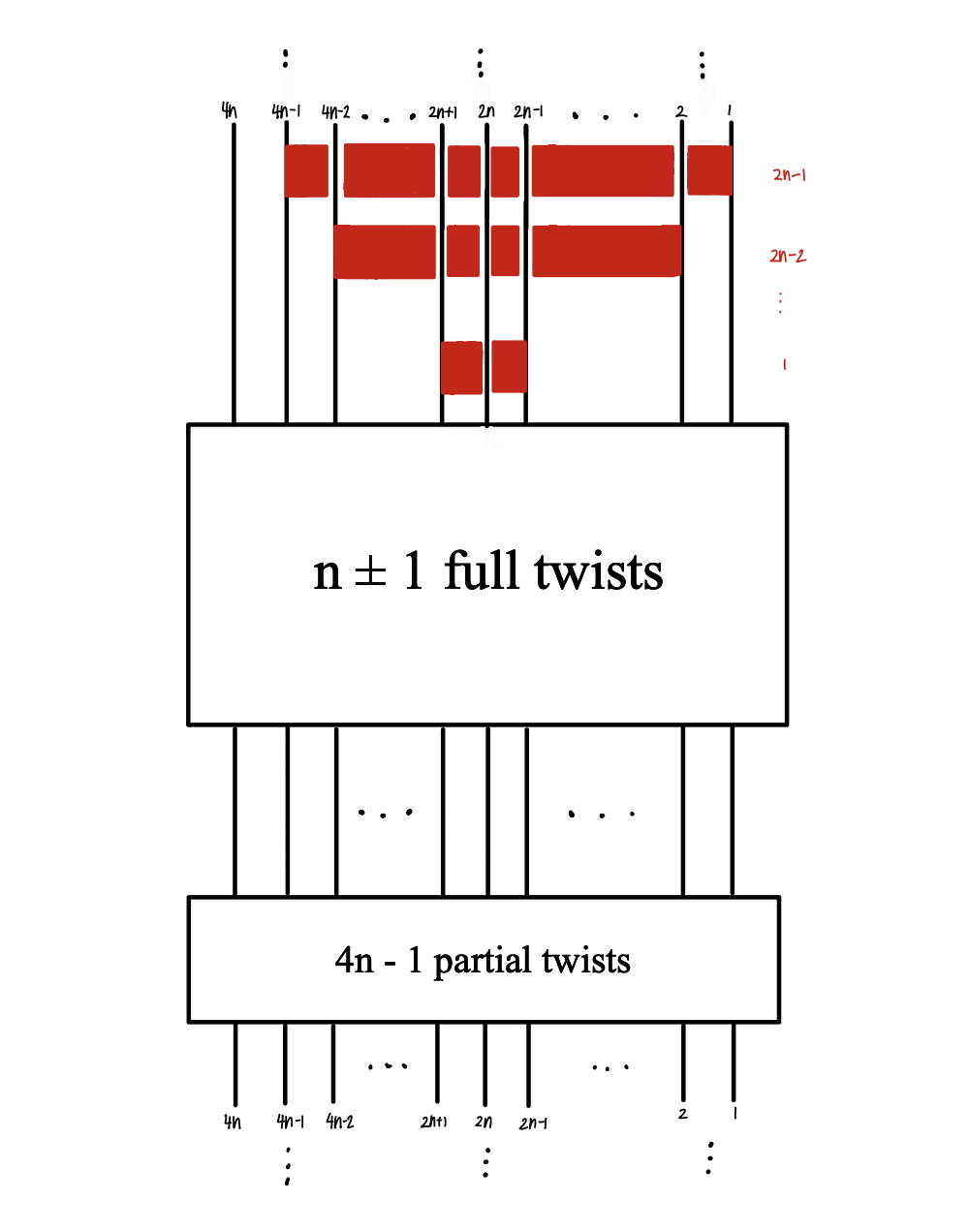}
        \caption{$2n-1 $ bands attached to $T_{4n, (2n\pm 1)^2 + 4n-2}$.}
        \label{fig:4}
    \end{subfigure}

    \caption{}
    \label{fig:test}
    \end{figure}

    Notice that, under this labeling scheme, the $i$th strand is joined to the $4n-i$th strand (the join is highlighted in red in each figure) for all $1 \leq i \leq 2n-1$. Let us refer to such a pair of strands and their join as the $(i, 4n-i)$ \textit{pair}. After every partial twist, the $(i, 4n-i)$ pair becomes the $(i + 1 \pmod {4n}, 4n-i + 1 \pmod {4n})$ pair. The final isotopy comes from pushing the pairs through the last partial twist, resulting in Figure \ref{fig:3}, where the $i$th and $4n-2-i$'th strands are paired. Let this knot be denoted as $K_1$. 

    We now examine the effect of a similar set of band surgeries on $T_{4n, (2n \pm 1)^2 + 4n-2}$. Our goal is to show that the resulting knot is isotopic to $K_1$. Consider the set of bands attached to $\gamma_4(T_{4n, (2n\pm 1)^2 + 4n-2})$ depicted in Figure \ref{fig:4}. Note that, in contrast to Figure \ref{fig:1}, where all the bands are attached over the strands relative to the direction of the twists, the bands in Figure \ref{fig:4} are positioned below the knot relative to the same direction of the twists. After performing the band surgeries, we isotope the diagram in Figure \ref{fig:4} to get Figure \ref{fig:5} by pulling each pair through the $n \pm 1$ partial twists.  

    \begin{figure}[ht!]
    \centering
     \begin{subfigure}{0.45\textwidth}
        \centering
        \includegraphics[width=\linewidth]{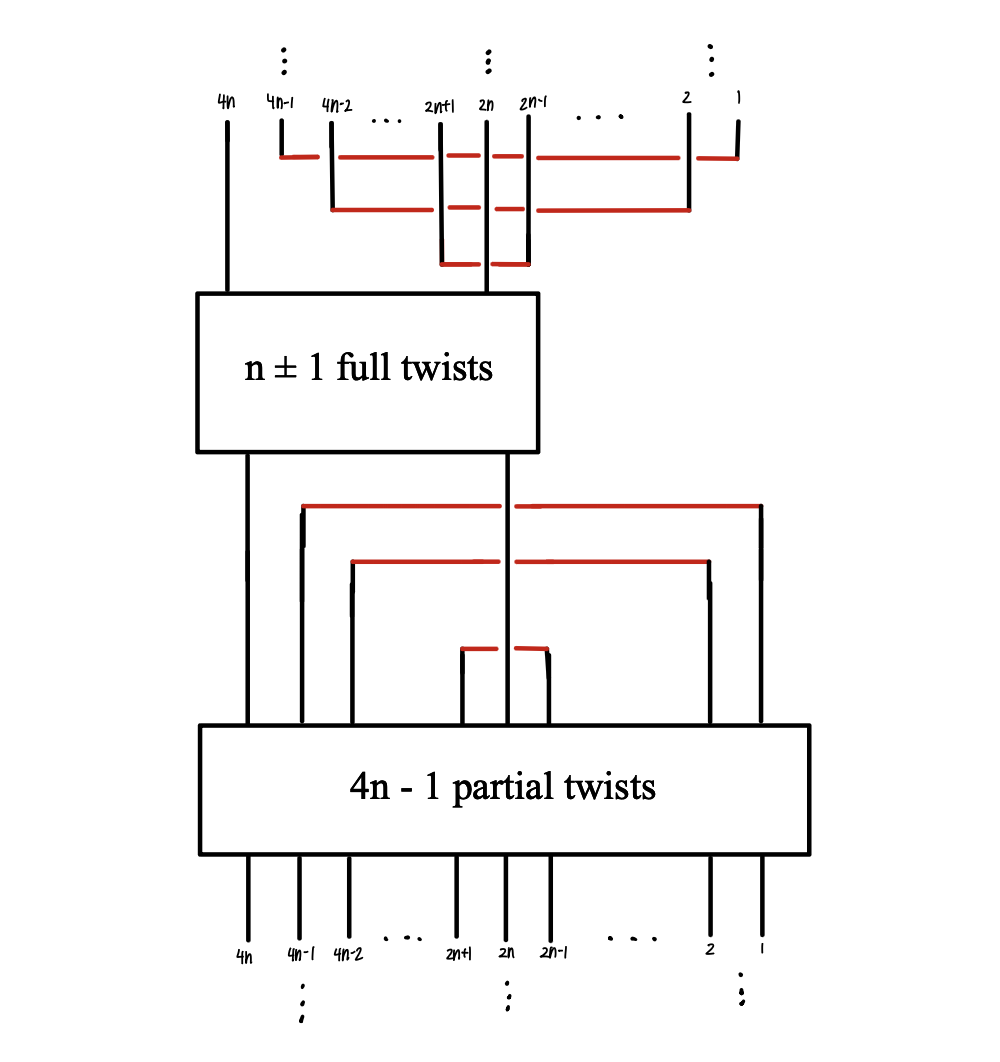}
        \caption{After an isotopy of Figure \ref{fig:4}.} 
        \label{fig:5}
    \end{subfigure}
    \hfill
    \begin{subfigure}{0.45\textwidth}
        \centering
        \includegraphics[width=\linewidth]{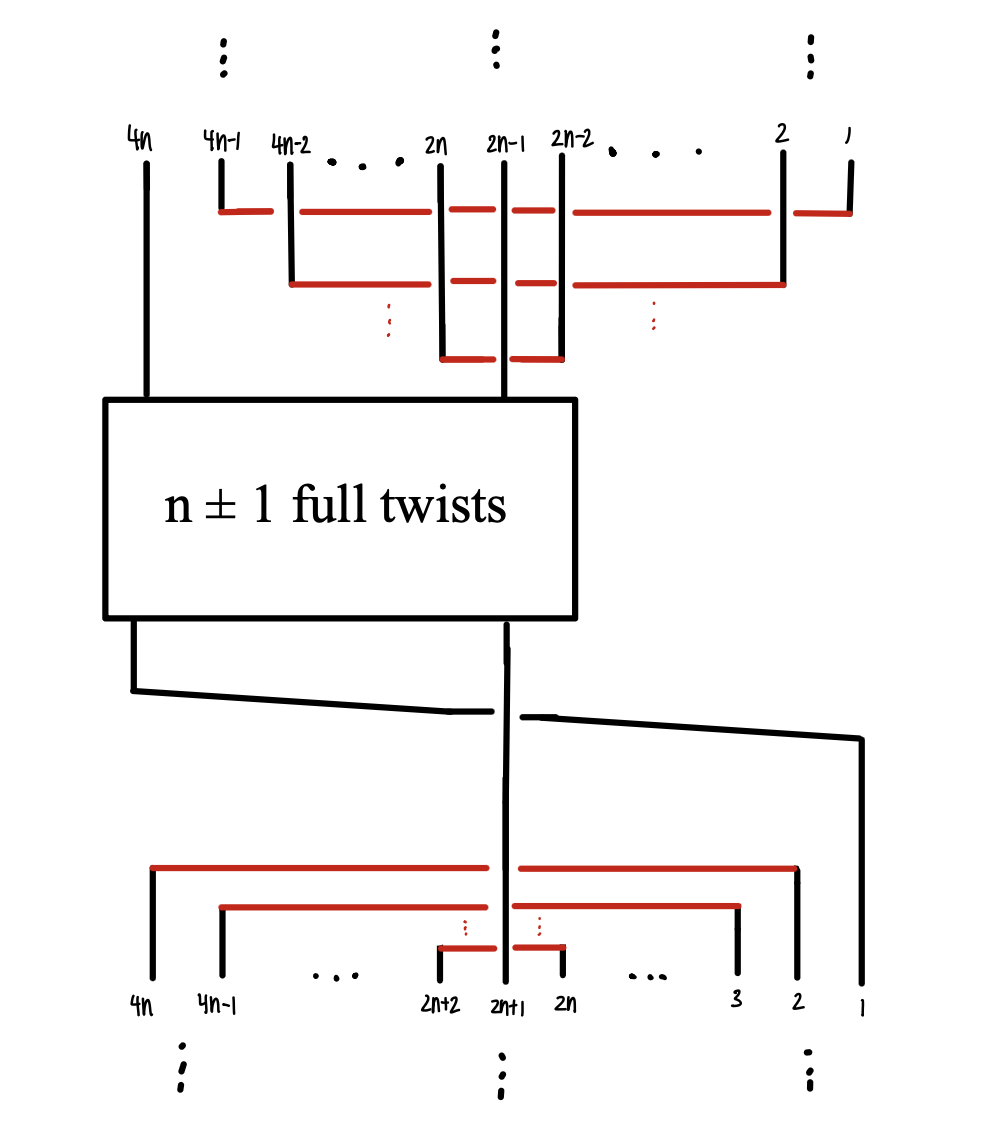}
        \caption{After pulling the pairs through the $4n-1$ partial twists.}
        \label{fig:6}
    \end{subfigure}

    \caption{}
    \label{fig_5_and_6}
    \end{figure}

    Similar to Figure \ref{fig:2} from the first set of torus knots,  the strands $i$ and $4n-i$ are paired, denoted in red. Pushing these pairs through the $4n - 1 $ partial twists transforms the $(i, 4n-i)$ pair to the $(i - (4n-1) \pmod{4n}, 4n-i - (4n -1) \pmod{4n}) = (i + 1 \pmod{4n}, 4n-i +1 \pmod{4n})$ pair. 
    For $2 \leq i \leq 2n$, this operation effectively shifts each band down by one position. 

    \begin{figure}[ht!]
        \centering
        \includegraphics[width=0.5\linewidth]{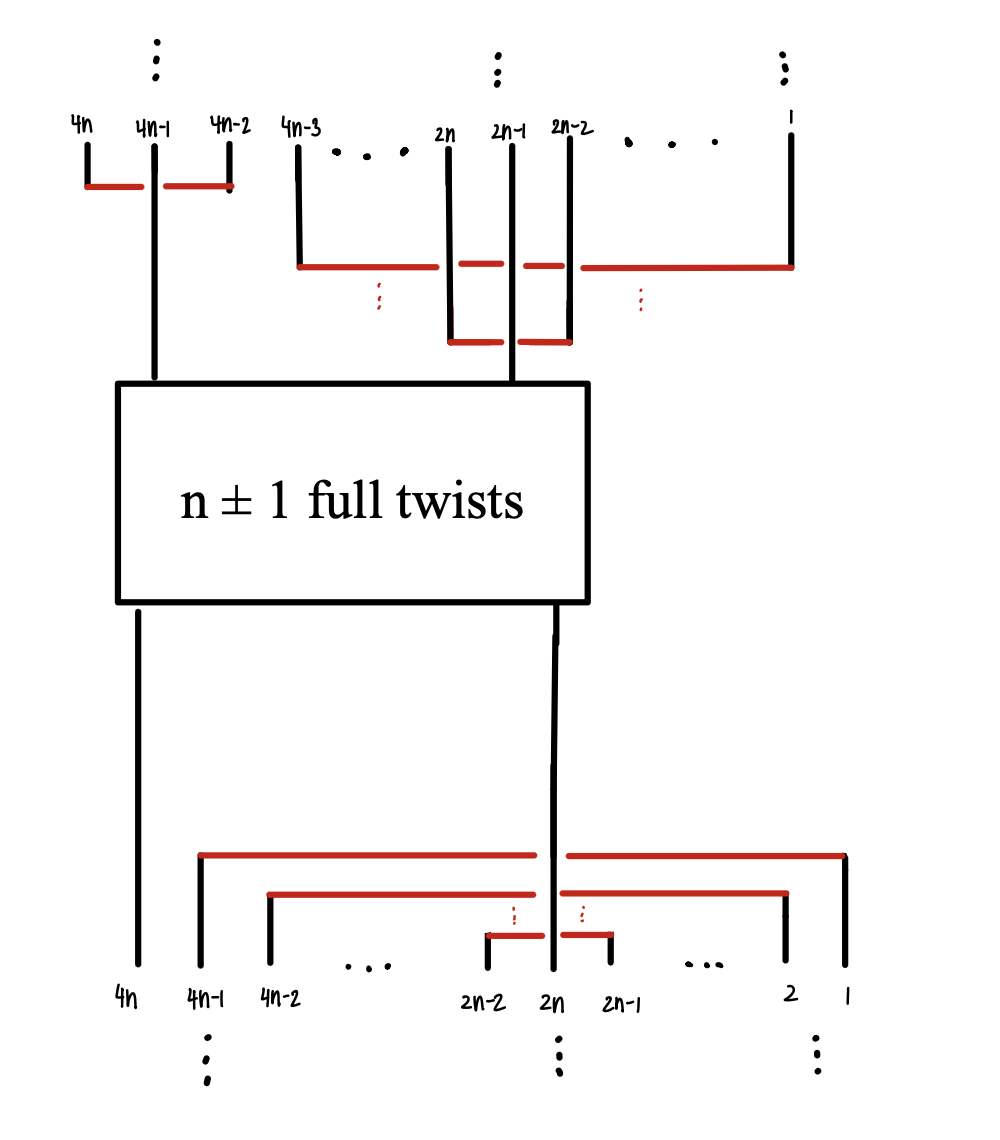}
        \caption{Moving Strand $1$ to position $4n$.}
        \label{fig:7}
    \end{figure}
    
    Additionally, each strand other than strands $2n $ and $4n$ (which are not paired up) belongs to precisely one pair and the pair completes two full rotations around the $4n - 1$ partial twists as it moves through them, eliminating exactly two partial twists in the process. The resulting diagram is shown in Figure \ref{fig:6}. Finally, pulling strand $1$ underneath the other strands to become strand $4n$ completes the isotopy, resulting in the diagram depicted in Figure \ref{fig:7}. Denote this knot $K_2$. Observe that $K_2$ is isotopic to $K_1$: the diagram in Figure \ref{fig:7} is simply a rotated and horizontally flipped version of Figure \ref{fig:3}.
    
    In Proposition 3.6 of \cite{longo2020infinitefamilycounterexamplesbatsons}, Longo proves that $K_1$ is the closure of the rational tangle $\tau \left(\frac{2n}{4n(n\pm 1) - 1} \right)$. According to Siebenman \cite{siebenman}, Casson, Gordon, and Conway showed all knots of this form are slice. One can verify this from Lisca’s classification of $2$-bridge knots \cite{Lisca_2007} to determine that $K_1$ is slice. It follows that $\gamma_4(T_{4n, (2n\pm 1)^2 + 4n-2}) \leq 2n-1$, and so $\gamma_4(T_{4n, (2n\pm 1)^2 + 4n-2}) = 2n-1$. 
\end{proof}

\section{Extending calculations to a new family of torus knots}

As noted in \cite{Jabuka_2021}, we can extend our family of examples by considering torus knots that, after a series of pinch moves, transform into the knots $T_{4n, (2n \pm 1)^2 + 4n-2}$. 
\begin{corollary}
    $\gamma_4(T_{4n + 2k, n(4n + 2k) - 1}) = 2n-1+k$ for all $n \geq 2$ and $k \geq 0$, and  $\gamma_4(T_{4n + 2k, (n+2)(4n + 2k) - 1}) = 2n-1+k$ for all $n \geq 1$ and $k \geq 0$. 
\end{corollary}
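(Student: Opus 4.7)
The plan is to reduce both families to the main theorem by iterating pinch moves, and then to close the remaining gap using the involutive Floer lower bound already cited as Theorem \ref{1.7_binns}. The key observation is that both families have the form $T_{p, jp-1}$ with $p = 4n+2k$ even and $j \in \{n, n+2\}$, so Lemma \ref{pinch-move-calculation-theorem} applies directly.

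First, I would verify that one pinch move on $T_{4n+2k,\, j(4n+2k)-1}$ produces $T_{4n+2k-2,\, j(4n+2k-2)-1}$. This is exactly the content of Lemma \ref{pinch-move-calculation-theorem}, provided $j(4n+2k)-1 \neq (4n+2k)-1$, which is automatic since $j \geq 2$ in both families. Iterating $k$ times lands on $T_{4n,\, 4nj-1}$: for $j=n$ this is $T_{4n,\,(2n-1)^2+4n-2}$, and for $j=n+2$ this is $T_{4n,\,(2n+1)^2+4n-2}$. By Theorem \ref{thm_main} the target knot has $\gamma_4 = 2n-1$, so $k$ applications of Proposition \ref{prop_bound} give
\[
\gamma_4\bigl(T_{4n+2k,\, j(4n+2k)-1}\bigr) \;\leq\; 2n-1+k.
\]

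For the matching lower bound I would invoke Theorem \ref{1.7_binns}. Setting $p=4n+2k$ and $q = jp-1$, one checks the congruence $q \pmod{2p}$: if $j$ is even, $q \equiv 2p-1$, and if $j$ is odd, $q \equiv p-1$. Either way $q \equiv p-1$ or $2p-1 \pmod{2p}$, which is precisely the hypothesis of Theorem \ref{1.7_binns}. Moreover $q > p$ because $j \geq n \geq 2$ in the first family and $j = n+2 \geq 3$ in the second. Combined with Lemma \ref{pinch-move-calculation-theorem}, which gives $\vartheta = p/2 = 2n+k$, Theorem \ref{1.7_binns} yields
\[
\gamma_4\bigl(T_{4n+2k,\, j(4n+2k)-1}\bigr) \;\in\; \{2n+k-1,\; 2n+k\},
\]
so in particular $\gamma_4 \geq 2n-1+k$. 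Matching with the upper bound gives equality.

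There is no real obstacle here beyond bookkeeping: the nontrivial work (identifying the slice knot $K_1$ in the proof of Theorem \ref{thm_main}, and the involutive Floer bound of Theorem \ref{1.7_binns}) is already in place. The only points to check carefully are that the divisibility and inequality hypotheses of Lemma \ref{pinch-move-calculation-theorem} and Theorem \ref{1.7_binns} survive the reduction, and that the restriction $n \geq 2$ in the first family correctly excludes the outlier $T_{4,3}$ while the second family starts at $n=1$ because the $T_{4,11}$ base case is handled by Theorem \ref{thm_main}.
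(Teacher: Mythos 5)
Your proposal is correct and follows essentially the same route as the paper: reduce by $k$ pinch moves to the base knots $T_{4n,(2n\mp1)^2+4n-2}$, apply Proposition \ref{prop_bound} for the upper bound $2n-1+k$, and use Theorem \ref{1.7_binns} for the matching lower bound. The only cosmetic difference is that you cite Lemma \ref{pinch-move-calculation-theorem} for the effect of each pinch move, whereas the paper recomputes it via the congruences of Lemma \ref{lem_pinch_move}; both yield the same reduction.
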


\begin{proof}

    \textbf{Case 1:} $\gamma_4(T_{4n + 2k, n(4n + 2k) - 1})$. 

    Using Lemma \ref{lem_pinch_move}, we analyze the pinch move applied to $\gamma_4(T_{4n + 2k, n(4n + 2k) - 1})$. The resulting system of congruences is: 
    \begin{align*}
        t \equiv -( n(4n + 2k) - 1)^{-1} &\pmod{4n+2k} \\ 
        h \equiv (4n+2k)^{-1} &\pmod{ n(4n + 2k) - 1} 
    \end{align*}
    This simplifies to: 
    \begin{align*}
        -t +1 \equiv 0 &\pmod{4n+2k} \\ 
        h \cdot (4n+2k) -1\equiv 0 &\pmod{ n(4n + 2k) - 1} 
    \end{align*}
    which has solutions $t = 1$ and $h = n$. 
    
    Thus, a pinch move reduces $T_{4n + 2k, n(4n + 2k) - 1}$ to $T_{4n + 2(k-1), n(4n + 2(k-1)) - 1}$. Repeating this $k$ times yields the knot $T_{4n, 4n^2 - 1} = T_{4n, (2n-1)^2 + 4n - 2}$, which satisfies $\gamma_4(T_{4n, 4n^2 - 1}) = 2n-1$ by Theorem \ref{thm_main}. By Proposition \ref{prop_bound}, we get $$ \gamma_4(T_{4n + 2k, n(4n + 2k) - 1}) \leq 2n-1+k $$ Combining this with the lower bound from Theorem \ref{1.7_binns}, we have $$\gamma_4(T_{4n + 2k, n(4n + 2k) - 1}) = 2n-1+k. $$ 

    \textbf{Case 2:} $\gamma_4(T_{4n + 2k, (n+2)(4n + 2k) - 1})$. 
    
    We apply the same strategy. Using Lemma \ref{lem_pinch_move}, we get: 
    \begin{align*}
        t \equiv -( (n+2)(4n + 2k) - 1)^{-1} &\pmod{4n+2k} \\ 
        h \equiv (4n+2k)^{-1} &\pmod{ (n+2)(4n + 2k) - 1} 
    \end{align*}
    which simplifies to 
    \begin{align*}
        -t + 1 \equiv 0 &\pmod{4n+2k} \\ 
        h \cdot (4n+2k) - 1 \equiv 0 &\pmod{ (n+2)(4n + 2k) - 1} 
    \end{align*}
    and has solutions $t = 1$ and $h = n+2$. 
    
    Thus, one pinch move sends $T_{4n + 2k, (n+2)(4n + 2k) - 1}$ to $T_{4n + 2(k-1), (n+2)(4n + 2(k-1)) - 1}$. After $k$ such moves, we reach $T_{4n, (n+2)(4n) - 1} = T_{4n, (2n+1)^2 + 4n-2}$. By applying Theorem \ref{thm_main}, Proposition \ref{prop_bound}, and Theorem \ref{1.7_binns}, we conclude $$\gamma_4(T_{4n + 2k, (n+2)(4n + 2k) - 1}) = 2n-1+k. $$
\end{proof}

\subsection*{Remark} By Theorem \ref{1.7_binns}, $\vartheta(T_{4n + 2k, n(4n + 2k) - 1}) 
 = \vartheta(T_{4n + 2k, (n+2)(4n + 2k) - 1}) = 2n+k. $ Thus all torus knots of this form are counterexamples to Batson's conjecture.

\bibliographystyle{alpha}
\bibliography{sample}
\end{document}